\font\got=eufm10
\font\gots=eufm10 at 7pt
\def\g2{ \hbox{\got g}_2}
\def\f4{\hbox{\got f}_4}
\def\d4{\hbox{\got d}_4}
\def\fs4{\hbox{\gots f}_4}
\def\F4{\hbox{\got F}_4}
\def\Fs4{\hbox{\gots F}_4}
\def\es7{ \hbox{\got e}_7}
\def\es8{ \hbox{\got e}_8}
\def\Q{ {\mathbb Q}}
\def\F{\mathbb F}
\def\R{\mathbb R}
\def\S{\mathbb S}
\def\br{\mathbb R}
\def\bs{\mathbb S}
\newcommand{\map}{\operatorname{{\rm map}}}
\def\al{\ifcase\xypolynode\or F \or A\or B\or C\or D\or G\fi}
\def\ala{\ifcase\xypolynode\or a \or b\or c\or d\or g\or f\fi}
\newtheorem{te}{Theorem}
\newtheorem{pr}{Proposition}
\newtheorem{co}{Corollary}
\title[Rational maps from  Euclidean configuration spaces to spheres]{Rational maps  from  Euclidean configuration spaces to spheres}
\begin{document}

\setlength{\unitlength}{0.06in}

  \author{Urtzi Buijs, Antonio Garv{\' i}n and Aniceto Murillo}

\begin{abstract} In this note we give an algorithm to determine the rational homotopy type of the free and pointed mapping spaces $\map(F(\br^m,k),\bs^n)$ and $\map^*(F(\br^m,k),\bs^n)$. An explicit description of these spaces is given for $k=3$. The general case for $n$ odd is also presented as an immediate consequence of the rational version of a classical result of Thom.
\end{abstract}
\maketitle



\section{Introduction}  We are interested in determining the rational homotopy type of the spaces $\map(F(\br^m,k),\bs^n)$ and $\map^*(F(\br^m,k),\bs^n)$ of free and pointed continuous maps from the configuration spaces of $k$ particles in $\R^m$ to the $n$-dimensional sphere. These spaces are useful. For instance, since $F(\R^m,2)\simeq \S^{m-1}$, they include mapping  spaces between spheres whose rational homotopy type have already been described in \cite{BM13}. Also, recall that the generalized Randakumar and Ramana Rao problem \cite{blazieg,nanra,nanrao2},  a strong generalization of the classical Borsuk-Ulam Theorem, asks whether a convex $m$-dimensional polytope can be partitioned into $k$ convex pieces on which $m-1$ continuous functions are equalized ($m,k\ge 2$). Whenever $k$ is a prime power, an affirmative answer \cite[Thm. 1.2]{blazieg}  follows  from the non existence of $\Sigma_k$-equivariant maps
$
F(\R^m,k)\longrightarrow S(W_k^{\oplus m-1})$. Here,  $W_k$ is the hyperplane of $\R^k$ of equation $x_1+\cdots+x_k=0$ and $S(W_k^{\oplus m-1})$ is the unit sphere on the direct sum of $m-1$ copies of $W_k$.  Observe that the symmetric group $\Sigma_k$ naturally acts on both spaces by permuting coordinates and columns respectively, and $S(W_k^{\oplus m-1})$ is just a special $\Sigma_k$ representation of $\S^{(m-1)(k-1)-1}$.

For $k=3$ we obtain the following decomposition in which, for simplicity in the notation, we denote $M(m,n)=\map(F(\mathbb R^m , 3), \S^n)$ and  $M^*(m,n)=\map^*(F(\mathbb R^m , 3), \S^n)$.

\begin{te}\label{theo}
(i) For $n$ odd and any $m\ge 2$,
$$
M(m,n)\simeq_\mathbb Q
\begin{cases}
&\S^n\times K(  \mathbb Q , n-(m-1))^3\times K(  \mathbb Q , n-2(m-1))^2 ,\, \text{if $n>2(m-1)$,}\\
&\S^n\times K(  \mathbb Q , n-(m-1))^3 ,\quad\text{if $m-1<n<2(m-1)$,}\\
&\displaystyle\bigsqcup _{\mathbb N} \S^n,\quad\text{if $n= m-1$,}\\
&\S^n,\quad\text{if $n< m-1$.}\\
\end{cases}
$$

$$
M^*(m,n)\simeq_\mathbb Q
\begin{cases}
&K(  \mathbb Q , n-(m-1))^3\times K(  \mathbb Q , n-2(m-1))^2 ,\, \text{if $n>2(m-1)$,}\\
& K(  \mathbb Q , n-(m-1))^3 ,\quad\text{if $m-1<n<2(m-1)$,}\\
&\displaystyle\bigsqcup _{\mathbb N} *,\quad\text{if $n= m-1$,}\\
&*,\quad\text{if $n< m-1$.}\\
\end{cases}
$$

\medskip

(ii) For $n=2$  and any $m\ge 2$,

\medskip

$$
M(m,2)\simeq_\mathbb Q
\begin{cases}
&\S^{2},\quad\text{if $m>4$,}\\
&\displaystyle\bigsqcup _{\mathbb N} \S^{2},\quad\text{if $m=4$,}\\
&(\S^1)^3\times \S^2 \sqcup\displaystyle\bigsqcup _{\mathbb N} (\S^{1})^2\times \S^3 ,\quad\text{if $m=3$,}\\
&   X\sqcup\displaystyle\bigsqcup _{\mathbb N} \S^1\times H_e\times K(  \mathbb Q , 2)^3\times \S^3   ,\quad\text{if $m=2$.}\\
\end{cases}
$$

$$
M^*(m,2)\simeq_\mathbb Q
\begin{cases}
&*,\quad\text{if $m>4$,}\\
&\displaystyle\bigsqcup _{\mathbb N} *,\quad\text{if $m=4$,}\\
& \displaystyle\bigsqcup _{\mathbb N} (\S^{1})^3 ,\quad\text{if $m=3$,}\\
&  \displaystyle\bigsqcup _{\mathbb N}Y\times K(  \mathbb Q , 2)^3\,  ,\quad\text{if $m=2$.}\\
\end{cases}
$$

\medskip

 (iii) For $n$ even greater than $2$ and  $m\ge 2$,

\medskip

$$
M(m,n)\simeq_\mathbb Q
\begin{cases}
&\S^{n},\quad\text{if $m>2n$,}\\
& \displaystyle\bigsqcup _{\mathbb N}\S^n      ,\quad\text{if $m=2n$,}\\
& K(\mathbb Q, 2n-m)\times \S^n           ,\quad\text{if $n+2\leq m\leq 2n-1$}.\\
\end{cases}
$$

$$
M^*(m,n)\simeq_\mathbb Q
\begin{cases}
&  *,\quad\text{if $m>2n $,}\\
&      \displaystyle\bigsqcup _{\mathbb N} *                                                         ,\quad\text{if $m=2n$,}\\
& K(\mathbb Q, 2n-m)          ,\quad\text{if $n+2\leq m\leq 2n-1$}.\\

\end{cases}
$$

\end{te}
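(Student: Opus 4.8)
The plan is to run the mapping-space algorithm on a convenient finite model of the source $F(\mathbb R^m,3)$ and to separate the displayed cases by a degree count. The first step is to pin down the rational homotopy type of the source: $F(\mathbb R^m,3)$ is formal, with cohomology the Arnold (Orlik--Solomon) algebra on three generators $\alpha_1,\alpha_2,\alpha_3$ of degree $m-1$ subject to the single quadratic Arnold relation, so its Poincar\'e polynomial is $1+3t^{m-1}+2t^{2(m-1)}$ and $H^{2(m-1)}$ is two-dimensional, spanned by $\beta_1,\beta_2$. Thus a finite model is the cochain algebra $(H^*(F(\mathbb R^m,3);\mathbb Q),0)$, and everything below is a computation with this explicit $A$.

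For part (i) the target is rationally an Eilenberg--MacLane space, $\S^n\simeq_{\mathbb Q}K(\mathbb Q,n)$, so the rational version of Thom's theorem gives at once
$$\map(F(\mathbb R^m,3),\S^n)\simeq_{\mathbb Q}\prod_{q}K\bigl(H^q(F(\mathbb R^m,3);\mathbb Q),\,n-q\bigr),$$
the pointed space being the same product taken over reduced cohomology. Substituting the Betti numbers $1,3,2$ in degrees $0,m-1,2(m-1)$ produces the candidate factors $\S^n$, $K(\mathbb Q,n-(m-1))^3$ and $K(\mathbb Q,n-2(m-1))^2$; each factor is present, absent, or collapses to a countable disjoint union of pieces according as its indicated degree is positive, negative, or zero, and this trichotomy is exactly the four displayed cases (the degenerate threshold $n=m-1$, where $K(\mathbb Q,0)^3$ appears, being the source of $\bigsqcup_{\mathbb N}\S^n$).

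For $n$ even the sphere is no longer an Eilenberg--MacLane space, and I would instead use its two-generator minimal model $(\Lambda(x,y),dy=x^2)$ with $|x|=n$, $|y|=2n-1$, together with the Haefliger--Brown--Szczarba model of the function space. Its generators are $x\otimes c$ and $y\otimes c$, with $c$ dual to $1,\alpha_i,\beta_k$, of respective degrees $n-|c|$ and $2n-1-|c|$; the differential is $d(x\otimes c)=0$ and $d(y\otimes c)=\sum(x\otimes c')(x\otimes c'')$, where the coproduct $\sum c'\otimes c''$ is dictated by the cup product of $A$, so the Arnold relation enters precisely here. The pair $x\otimes1,\ y\otimes1$ always contributes the $\S^n$ summand; the classes $y\otimes\alpha_i$, of degree $2n-m$, contribute the $K(\mathbb Q,2n-m)$ factor(s); and for $n>2$ every subcase of (iii) is decided by these counts alone, since $x\otimes\alpha_i,\ x\otimes\beta_k,\ y\otimes\beta_k$ are all of negative degree in that range while $y\otimes\alpha_i$ is positive, zero, or negative according as $m<2n$, $m=2n$, or $m>2n$, the degree-zero case forcing the countable family $\bigsqcup_{\mathbb N}\S^n$.

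The real work is the low corner $n=2$, $m\in\{2,3,4\}$, where generators collapse into degree $0$ and create infinitely many components of non-formal rational homotopy type. For $m=3$ the classes $x\otimes\alpha_i$ lie in degree $0$; fixing a component assigns them scalar values $c_i$ (a Maurer--Cartan choice), after which $d(y\otimes\alpha_i)=2c_i\,(x\otimes1)$. Exactly as in the model calculation for $\map(\S^n,\S^n)$ consistent with \cite{BM13}, the zero choice keeps $x\otimes1$ and yields $(\S^1)^3\times\S^2$, whereas every nonzero choice makes $x\otimes1$ exact and collapses $\S^2$ to $\S^3$, giving $\bigsqcup_{\mathbb N}(\S^1)^2\times\S^3$. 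For $m=2$ the degree-zero generators are $x\otimes\beta_k$, and now the perturbed differential of $y\otimes\beta_k$ carries the \emph{quadratic} term $\sum\mu^k_{ij}(x\otimes\alpha_i)(x\otimes\alpha_j)$ coming from the Arnold relation; minimalizing the resulting twisted quadratic differential is what produces the genuinely non-formal components $X$, $Y$ and the auxiliary factor $H_e$. The principal obstacle is therefore not the bookkeeping of degrees but the explicit minimalization of these finitely many twisted models at $n=2$, together with the verification that every component is rationally equivalent to one of the listed shapes. The pointed statements follow throughout by discarding the $(-)\otimes1$ generators and repeating the analysis with reduced cohomology.
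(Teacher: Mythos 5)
Your proposal reproduces the paper's strategy step for step: formality of $F(\R^m,3)$, so that $B=(H^*(F(\R^m,3);\Q),0)$ with Betti numbers $1,3,2$ in degrees $0$, $m-1$, $2(m-1)$ serves as the source model; Proposition \ref{propo} (the rational Thom decomposition) for odd $n$; and the Haefliger--Brown--Szczarba model $(\Lambda(V\otimes B^\sharp),\widetilde d)$ with components classified by augmentations for even $n$. Your degree bookkeeping is correct and matches the paper's for part (i), for $m\geq 4$ at $n=2$, and for $m\geq n+2$ at even $n\geq 4$. One place you should not stay noncommittal: your phrase ``the $K(\Q,2n-m)$ factor(s)'' hides a multiplicity that has to be pinned down. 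In the range $n+2\leq m\leq 2n-1$ all three classes $q_i=y\otimes\alpha_i$ survive as cycles of degree $2n-m$ in the component model (their differentials $2xp_i$ die because the $p_i=x\otimes\alpha_i$ have negative degree there), so the exponent of the Eilenberg--MacLane factor must be stated and justified explicitly, not elided.

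The genuine gap is part (ii) for $m=2,3$ --- exactly the cases where the theorem says something beyond degree counting --- where you announce the answers rather than derive them. At $m=3$ the degree-zero values $\lambda_i=\varphi(p_i)$ are constrained by $\varphi(ds_j)=0$; the paper derives the quadratic system $\lambda_2(\lambda_1-\lambda_3)=0$, $\lambda_3(\lambda_1+\lambda_2)=0$, whose solution set consists of the four rational lines $(\lambda,0,0)$, $(0,\lambda,0)$, $(0,0,\lambda)$, $(\lambda,-\lambda,\lambda)$; your parenthetical ``Maurer--Cartan choice'' gestures at this but without solving the system you have neither the indexing of components nor the fact that every nonzero solution yields $(\S^1)^2\times\S^3$ (for $(\lambda,-\lambda,\lambda)$ this needs a change of basis among the $q_i$, not merely the observation that $x\otimes 1$ becomes exact). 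At $m=2$ the identification of the components is precisely the ``explicit minimalization'' you defer: the paper writes the component model with $ds_1=2(\lambda_1x-p_1p_3-p_2p_3)$ and $ds_2=2(\lambda_2x-p_1p_2+p_2p_3)$, treats $(\lambda_1,\lambda_2)=(0,0)$ separately to exhibit $X$ as the total space of the stated rational fibration, and for $(\lambda_1,\lambda_2)\neq(0,0)$ performs an explicit basis change and discards a contractible factor to reach the model realizing $\S^1\times H_e\times K(\Q,2)^3\times\S^3$; the pointed model is likewise massaged into the model of the nilmanifold $Y$ tensor $(\Lambda(u_2,v_2,w_2),0)$. Since naming $X$, $H_e$ and $Y$ is the content of the statement itself, a proof must carry out these reductions; your outline correctly locates where the Arnold relation enters the perturbed differential, but it stops exactly where the paper's computation begins.
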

Here $H_e$ is the Heisenberg manifold, $X$ is a rational space which is the total space  in a rational fibration of the form
$$
(\S^1)^2\times K(\mathbb Q, 2)^3\rightarrow X\rightarrow (\S^1)^3\times \S^2 ,
$$
and $Y$ is the nilmanifold \cite{ha} whose minimal model is
$$
(\Lambda (a_1, b_1, c_1, x_1, y_1),d),\quad  dx_1=a_1b_1, \quad dy_1=b_1c_1,
$$
with subscripts indicating degree (see Section 3 for details). As usual,  $\simeq_\mathbb Q$  means ``rationally equivalent to'' and $\bigsqcup$ denotes  disjoint union.

The method used in the proof  may well serve as an algorithm to compute $\map(F(\R^m,k),\S^n)$ and $\map^*(F(\R^m,k),\S^n)$ for given integers $m,k\ge 2$ and $n\ge 1$. However, the general case for more than three particles  for $n$ even, does not produce such a straight decomposition.
Nevertheless,  whenever $n$ is odd, $\bs^n$ is rationally an $H$-space and the spaces $\map(F(\br^m,k),\bs^n)$, $\map^*(F(\br^m,k),\bs^n)$ can be easily decomposed as  products of Eilenberg-MacLane spaces in view of the rational version, both free and pointed, of the classical work of Thom \cite{thom,Hae82}, see Proposition \ref{propo}:

\begin{te}\label{elprime} Denote $M(m,k,n)=\map(F(\br^m,k),\bs^n)$ and  $M^*(m,k,n)=\map^*(F(\br^m,k),\bs^n)$. Then:
$$
M(m,k,n)\simeq_\mathbb Q
\begin{cases}
&\displaystyle\prod _{j=0}^{k-1}K(  \mathbb Q , n-j(m-1))^{k\brack k-j} ,\qquad  \text{if $n>(k-1)(m-1)$},\\
&\displaystyle\bigsqcup _{\mathbb N}\left ( \prod _{j=0}^{l-1} K(  \mathbb Q , n-j(m-1))^{k\brack k-j} \right ) ,\,  \text{if $n=l(m-1)$},\\
&\hspace {7cm}  1\leq l\leq k-1,\\
&\displaystyle\prod _{j=0}^{l}K(  \mathbb Q , n-j(m-1))^{k\brack k-j},\, \text{if $l(m-1)<n<(l+1)(m-1)$},\\
&\hspace {7cm}  1\leq l\leq k-2,\\
&\S^n,\quad\text{if $n< m-1$.}\\
\end{cases}
$$

$$
M^*(m,k,n)\simeq_\mathbb Q
\begin{cases}
&\displaystyle\prod _{j=1}^{k-1}K(  \mathbb Q , n-j(m-1))^{k\brack k-j} ,\qquad  \text{if $n>(k-1)(m-1)$},\\
&\displaystyle\bigsqcup _{\mathbb N}\left ( \prod _{j=1}^{l-1} K(  \mathbb Q , n-j(m-1))^{k\brack k-j} \right ) ,\,  \text{if $n=l(m-1)$},\\
&\hspace {7cm}  1\leq l\leq k-1,\\
&\displaystyle\prod _{j=1}^{l}K(  \mathbb Q , n-j(m-1))^{k\brack k-j},\, \text{if $l(m-1)<n<(l+1)(m-1)$},\\
&\hspace {7cm}  1\leq l\leq k-2,\\
&*,\quad\text{if $n< m-1$.}\\
\end{cases}
$$

\end{te}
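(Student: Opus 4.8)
The plan is to deduce the statement directly from Proposition \ref{propo}. The point of departure is that for $n$ odd the sphere is rationally an $H$-space: its minimal model is $(\Lambda x_n,0)$, so $\bs^n\simeq_{\mathbb Q} K(\mathbb Q,n)$. This places us exactly in the setting of the rational Thom theorem, which for a space $X$ of finite rational type yields
\[
\map(X,\bs^n)\simeq_{\mathbb Q}\prod_q K\bigl(H^q(X;\mathbb Q),\,n-q\bigr),\qquad \map^*(X,\bs^n)\simeq_{\mathbb Q}\prod_{q\ge 1} K\bigl(\widetilde H^q(X;\mathbb Q),\,n-q\bigr),
\]
once $X=F(\br^m,k)$, with the conventions that $K(V,r)$ is a point for $r<0$ and is the underlying countable discrete set of $V$ for $r=0$. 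A conceptual remark worth isolating is that, since the target is rationally a product of Eilenberg--MacLane spaces, only the \emph{additive} rational cohomology of $X$ enters and the cup-product structure is irrelevant. This is precisely why the odd case collapses to a clean product, in contrast with the even case of Theorem \ref{theo}.

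It then remains to feed in $X=F(\br^m,k)$, whose rational cohomology is classical: the Poincar\'e polynomial is
\[
P\bigl(F(\br^m,k);t\bigr)=\prod_{i=1}^{k-1}\bigl(1+i\,t^{m-1}\bigr),
\]
so $H^*(F(\br^m,k);\mathbb Q)$ is concentrated in the degrees $j(m-1)$ for $0\le j\le k-1$. Writing $u=t^{m-1}$, the Betti number in degree $j(m-1)$ is the coefficient of $u^j$ in $\prod_{i=1}^{k-1}(1+iu)$, namely the elementary symmetric polynomial $e_j(1,\dots,k-1)$. Comparing with $\prod_{i=1}^{k-1}(x+i)=\sum_{j}{k\brack k-j}\,x^{k-1-j}$ identifies this with the unsigned Stirling number of the first kind, $e_j(1,\dots,k-1)={k\brack k-j}$. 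Substituting into Proposition \ref{propo} gives
\[
\map\bigl(F(\br^m,k),\bs^n\bigr)\simeq_{\mathbb Q}\prod_{j=0}^{k-1}K\bigl(\mathbb Q,\,n-j(m-1)\bigr)^{\,{k\brack k-j}},
\]
with the pointed analogue obtained by deleting the $j=0$ factor, which is the sole contribution of $H^0$.

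The four cases of the statement are then read off by sorting the exponents $n-j(m-1)$ according to sign. When $n>(k-1)(m-1)$ every exponent is positive and all factors are genuine Eilenberg--MacLane spaces; when $l(m-1)<n<(l+1)(m-1)$ the factors with $j>l$ are contractible and drop out, leaving the truncated product up to $j=l$; when $n<m-1$ only $j=0$ survives, giving $\bs^n$ (respectively $*$ in the pointed case). The one delicate point is the boundary value $n=l(m-1)$: there the index $j=l$ produces a degree-zero factor $K(\mathbb Q,0)^{{k\brack k-l}}$, a countable discrete set, which accounts for the $\bigsqcup_{\mathbb N}$, each component carrying the remaining product over the indices $j<l$. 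Given Proposition \ref{propo}, this is the whole argument: the only genuinely non-formal input is the rational Thom decomposition itself, so the main obstacle is entirely discharged there, and what is left is the cohomology computation together with the careful bookkeeping of the degree-zero (disjoint-union) case.
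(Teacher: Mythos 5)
Your proposal is correct and follows essentially the same route as the paper's own proof: apply Proposition \ref{propo} (the rational Thom decomposition) to $X=F(\br^m,k)$, extract the Betti numbers from the Fadell--Neuwirth Poincar\'e series $\prod_{i=1}^{k-1}(1+it^{m-1})$ and identify them with the unsigned Stirling numbers ${k\brack k-j}$, then sort the factors $K(\mathbb Q, n-j(m-1))$ by the sign of the degree, the boundary case $n=l(m-1)$ producing the degree-zero (discrete) factor that yields $\bigsqcup_{\mathbb N}$. The only point you leave implicit, which the paper states explicitly, is that $F(\br^m,k)$ is formal (Kontsevich), as required by the hypotheses of Proposition \ref{propo}.
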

Here, as in \cite{Knuth}, the brackets $k\brack k-j$ represent the unsigned Stirling numbers of the first kind.

 As an illustrative example, for the case including the generalized Randakumar and Ramana Rao problem, we get directly:

 \begin{co}\label{coro} If either $m$ or $k$ is an odd number, then:

For $m\geq 3$,
$$
\map(F(\mathbb R^m , k), \S^{(m-1)(k-1)-1})\simeq_\mathbb Q
 \prod _{j=0}^{k-2} K(  \mathbb Q , (k-(j+1))(m-1)-1 )^{k\brack k-j} ,
$$
$$
\map^*(F(\mathbb R^2 , k), \S^{(m-1)(k-1)-1})\simeq_\mathbb Q
 \prod _{j=1}^{k-2} K(  \mathbb Q , (k-(j+1))(m-1)-1 )^{k\brack k-j} .
$$

For $m=2$,
$$
\map(F(\mathbb R^2 , k), \S^{k-2})\simeq_\mathbb Q
\displaystyle\bigsqcup _{\mathbb N}\left ( \prod _{j=0}^{k-3} K(  \mathbb Q , k-(2+j))^{k\brack k-j} \right ) ,
$$
$$
\map^*(F(\mathbb R^2 , k), \S^{k-2})\simeq_\mathbb Q
\displaystyle\bigsqcup _{\mathbb N}\left ( \prod _{j=1}^{k-3} K(  \mathbb Q , k-(2+j))^{k\brack k-j} \right ).\eqno{\square}
$$
\end{co}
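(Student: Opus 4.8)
The plan is to read off Corollary~\ref{coro} directly from Theorem~\ref{elprime} by specializing to $n=(m-1)(k-1)-1$, the dimension of the sphere appearing in the generalized Randakumar and Ramana Rao problem. The first thing to check is the parity hypothesis: since $(m-1)(k-1)$ is even precisely when one of the factors $m-1$, $k-1$ is even, the integer $n=(m-1)(k-1)-1$ is odd exactly when $m$ or $k$ is odd. Thus under the stated hypothesis $n$ is odd and Theorem~\ref{elprime} applies. It then only remains to identify, for each value of $m$, which branch of the theorem governs this particular $n$, and to rewrite the resulting degrees.

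For $m\ge 3$ I would argue that we land in the open-interval branch $l(m-1)<n<(l+1)(m-1)$ with $l=k-2$. Indeed, writing $n=(k-1)(m-1)-1$ and using $m-1\ge 2$ gives
$$
(k-2)(m-1)=(k-1)(m-1)-(m-1)<(k-1)(m-1)-1=n<(k-1)(m-1)=(l+1)(m-1),
$$
and moreover $n\equiv -1\pmod{m-1}$ with $m-1\ge 2$, so $n$ is not a multiple of $m-1$; this rules out the equality branch $n=l(m-1)$. Substituting $l=k-2$ into the corresponding formula of Theorem~\ref{elprime} and using the identity $n-j(m-1)=(k-(j+1))(m-1)-1$ to relabel the degrees yields exactly the two displayed products, the free one running over $0\le j\le k-2$ and the pointed one over $1\le j\le k-2$.

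For $m=2$ the situation is the opposite: here $m-1=1$ divides every integer, so $n=k-2=(k-2)(m-1)$ falls into the equality branch $n=l(m-1)$ with $l=k-2$, which is legitimate since $1\le k-2\le k-1$. The free and pointed formulas of Theorem~\ref{elprime} for this branch give the countable disjoint unions of products over $0\le j\le l-1=k-3$ and $1\le j\le k-3$ respectively, and the same relabelling $n-j(m-1)=k-(2+j)$ turns these into the last two displayed formulas.

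There is no genuine obstacle here; the mathematical content is entirely in Theorem~\ref{elprime}, and the corollary is a bookkeeping specialization. The only points requiring care are the parity equivalence that secures the odd-$n$ hypothesis, the correct selection of the branch --- an open interval when $m\ge 3$ versus an endpoint when $m=2$, forced by whether $m-1$ divides $1$ --- and the consistent reindexing of the Eilenberg--MacLane factors. Degenerate small cases, such as $k=2$ where the product collapses to the single factor $K(\Q,m-2)\simeq_\mathbb Q\S^{m-2}$, or $k=3$ where a pointed product becomes a point, merely specialize the formulas and cause no difficulty.
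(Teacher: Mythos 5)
Your proposal is correct and matches the paper's own route exactly: the corollary is stated there with no separate argument precisely because it is, as you say, a direct specialization of Theorem~\ref{elprime} to $n=(m-1)(k-1)-1$, where the parity hypothesis guarantees $n$ odd, the open-interval branch with $l=k-2$ applies for $m\ge 3$, and the endpoint branch $n=l(m-1)$ applies for $m=2$. Your added checks (the congruence $n\equiv -1\pmod{m-1}$ ruling out the equality branch for $m\ge 3$, and the degenerate cases $k=2,3$) are sound bookkeeping consistent with the paper.
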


\section{Preliminaries}
\label{prelim}

We will use basic results from rational homotopy theory
for which~\cite{FHT00} has become a standard reference.
Via the classical  adjoint functors between the  categories
of commutative differential graded algebras (CDGA's henceforth) over $\Q$ which is always assumed to be the ground field, and  simplicial sets, given by
 piecewise linear forms and realization,
$$\mbox{SSets}
\renewcommand{\arraystretch}{0.5}\begin{array}[t]{l}
\stackrel{A_{PL}}{\to}\\
\leftarrow\\\langle\, \rangle\end{array}
\renewcommand{\arraystretch}{1}\, \mbox{CDGA},$$
one has the notion of (Sullivan) model of a non necessarily connected space $Z$ such that all its
components are nilpotent \cite{BM06}:
By such a model we mean  a cofibrant
$\mathbb{Z}$-graded free commutative differential graded algebra
whose simplicial realization has the same
homotopy type as the Milnor simplicial approximation of the rationalization of $Z$.

If $(\Lambda W,d)$ is a  model of  $Z$ in this sense and  $u\colon
\Lambda W\to \mathbb Q $ the model of a $0$-simplex of $Z$, consider the differential ideal  $K_u$ generated by  $A_1 \cup A_2 \cup A_3$, being
$$A_1=W^{< 0},\ A_2=d W^0,\ A_3=\{ \alpha -u(\alpha ): \alpha \in W^0 \}. $$
Then $(\Lambda W, d )/K_u$ is again a free commutative differential graded algebra    of the form $(\Lambda (\overline{W}^1\oplus W^{\geq 2}),d_u)$ in which $\overline{W}^1$ is a complement in $W^1$
of $d(W^0)$ up to identifications given by $A_1$ and $A_3$, see \cite[S4]{BM06} for details. Then \cite[4.3]{BS97}, $(\Lambda (\overline{W}^1\oplus W^{\geq 2}),d_u)$ is a Sullivan model of the path component of $Z$ containing
the fixed $0$-simplex.

In particular, if $X$ is a nilpotent finite
CW-complex and $Y$ is a finite type
CW-complex then   the components of the free and pointed mapping spaces $\map(X,Y)$ and $\map^*(X,Y)$ are nilpotent \cite{HMR75} and the above applies. We briefly recall  the Haefliger model \cite{Hae82} of these  spaces and its components following the presentation in
 \cite{BS97,BM06}.

 Let $B$ be a finite dimensional  commutative differential graded algebra model of $X$ and let
$A=(\Lambda V, d)$ be a Sullivan  model of $Y$. We
denote by $B^\sharp$ the differential graded coalgebra dual of $B$, $B^\sharp=Hom (B, \mathbb Q)$,  with the grading
$({B^\sharp})^{-n}=(B^\sharp)_{n}=Hom(B^n,\mathbb{Q})$. Consider the free
commutative differential graded algebra $\Lambda ( A\otimes {B^\sharp})$ generated by
the $\mathbb Z$-graded vector space  $A\otimes {B^\sharp}$, with
the differential $d$ induced by the one  on $A$ and $B^\sharp$.
 Let $I\subset \Lambda (A\otimes {B^\sharp})$ be the differential ideal generated by
$1\otimes 1-1$, and the elements of the form
$$
a_1a_2\otimes \beta -\sum_j(-1)^{|a_2||\beta_j'|}(a_1\otimes
\beta_j')(a_2\otimes \beta _j''),\quad a_1,a_2\in A,\, \beta\in B^\sharp,
$$
where the coproduct on $\beta$ is,
 $\Delta \beta =\sum_j \beta_j'\otimes
\beta_j''$. The inclusion $V\otimes {B^\sharp}\hookrightarrow A\otimes {B^\sharp}$ induces an isomorphism of graded algebras
$$
\rho \colon \Lambda ( V\otimes {B^\sharp})\stackrel{\cong}{\longrightarrow} \Lambda (A\otimes {B^\sharp})/I
$$
and thus $\widetilde{d}=\rho^{-1}d\rho $ defines a differential in $\Lambda
(V\otimes {B^\sharp})$.
We can do the same construction taking $(B_+)^\sharp$ (elements of $B^\sharp$ of negative degree) instead of $B^\sharp$,  and taking $\bar \Delta$ (the reduced coproduct) instead of $\Delta$.

 Then \cite{BS97, BM06}, the commutative differential graded algebra $(\Lambda
(V\otimes {B^\sharp}), \widetilde{d} )$ is a model of  $\map(X,Y)$,
and
the commutative differential graded algebra  $(\Lambda (V\otimes B_+^\sharp
), \widetilde{d} )$ is a model of $\map^* (X,Y)$.  \label{BSH}

 Now, let  $\varphi\colon (\Lambda V,d)\to (B,\delta )$ a model of a given map $f\colon X\to Y$. The morphism $\varphi$ induces a natural augmentation  denoted also by
$\varphi\colon \bigl(\Lambda(V\otimes B^\sharp),\widetilde d\bigr)\to\mathbb Q$ which can be thought as the model of the $0$-simplex of the mapping space representing $f$. Applying the process above
we obtain the  Sullivan algebra
$$
\bigl(\Lambda \bigl(\overline{V\otimes B^\sharp}^1\otimes (V\otimes B^\sharp)^{\ge 2}\bigr),\widetilde d_\varphi\bigr)$$
which is
a Sullivan model of the component $\map_f(X,Y)$ of the free mapping space containing $f$ \cite{BM06}.
In the same way,
$$
\bigl(\Lambda \bigl(\overline{V\otimes B_+^\sharp}^1\otimes (V\otimes B_+^\sharp)^{\ge 2}\bigr),\widetilde d_\varphi\bigr)$$
is a Sullivan model of $\map^*_f(X,Y)$.

The next result will be used in next sections. It may be considered a rational reformulation of the classical decomposition of Thom \cite{thom}, see also \cite{Hae82}.

\begin{pr}\label{propo}
Let $X$ be a formal finite nilpotent complex and let $Y$ be of the rational homotopy type of a finite type H-space. For $j\geq 0$ let
$$N_j=\sum_{r-s=j} \dim\Pi _r(Y)\otimes\mathbb Q \cdot \dim H^s(X; \mathbb Q), $$
$$N'_j=\sum_{r-s=j,\,  s\neq 0} \dim\Pi _r(Y)\otimes\mathbb Q \cdot \dim H^s(X; \mathbb Q). $$
Then,
 $$\map(X,Y)\simeq_\mathbb Q \begin{cases} \prod_{j\geq 1}K(\mathbb Q , j)^{N_j}& \text{if $N_0=0$},\\
 \bigsqcup_\mathbb N\left( \prod_{j\geq 1}K(\mathbb Q , j)^{N_j} \right) & \text{if $N_0\not=0$}.
 \end{cases}
$$
$$ \map^*(X,Y)\simeq_\mathbb Q \begin{cases} \prod_{j\geq 1}K(\mathbb Q , j)^{N'_j} & \text{if $N'_0=0$},\\
\bigsqcup_\mathbb N\left( \prod_{j\geq 1}K(\mathbb Q , j)^{N'_j} \right) & \text{if $N'_0\not=0$}.
\end{cases}
$$
\end{pr}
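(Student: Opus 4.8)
The plan is to exploit the two hypotheses simultaneously: the formality of $X$ lets me model it by its cohomology with zero differential, while the $H$-space structure of $Y$ forces its minimal model to have zero differential as well. Together these will make the Haefliger model recalled above carry the \emph{zero} differential, after which the whole statement reduces to reading off degrees and counting components.

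First I would fix the two model choices. Since $Y$ has the rational homotopy type of a finite type $H$-space, it is rationally a product of Eilenberg--MacLane spaces, so its minimal Sullivan model has the form $(\Lambda V,0)$ with $\dim V^r=\dim\Pi_r(Y)\otimes\mathbb Q$. Since $X$ is formal and finite, I may take $B=(H^*(X;\mathbb Q),0)$, a finite dimensional CDGA model with zero differential; then $B^\sharp=Hom(B,\mathbb Q)$, and its subspace $B_+^\sharp$, also carry zero differential. Feeding these into the Haefliger model $(\Lambda(V\otimes B^\sharp),\widetilde d)$ of $\map(X,Y)$, I would observe that the differential $d$ on $\Lambda(A\otimes B^\sharp)$ is the derivation induced by the differentials of $A$ and of $B^\sharp$; as both vanish, $d=0$ on generators and hence on all of $\Lambda(A\otimes B^\sharp)$, so $\widetilde d=\rho^{-1}d\rho=0$. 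The same reasoning gives the zero differential on the pointed model $(\Lambda(V\otimes B_+^\sharp),0)$.

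A degree count then identifies the generators: with the convention $(B^\sharp)^{-s}=Hom(B^s,\mathbb Q)$ I get $\dim(V\otimes B^\sharp)^j=\sum_{r-s=j}\dim\Pi_r(Y)\otimes\mathbb Q\cdot\dim H^s(X;\mathbb Q)=N_j$, and passing from $B^\sharp$ to $B_+^\sharp$ removes precisely the $s=0$ summand, yielding $N'_j$. Next I would pass to a path component using the procedure recalled before the statement. Because the differential is already zero, the ideal $K_u$ simply kills $W^{<0}$ and rigidifies $W^0$ to constants, while $\overline{W}^1=W^1$ is the full degree-one space; the resulting component model is $\bigl(\Lambda\bigl((V\otimes B^\sharp)^{\ge 1}\bigr),0\bigr)$, independent of the chosen $0$-simplex, which is exactly the minimal model of $\prod_{j\ge1}K(\mathbb Q,j)^{N_j}$.

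Finally I would settle the number of components. If $N_0=0$ the mapping space is rationally connected and the product of Eilenberg--MacLane spaces is the whole answer; if $N_0\ne0$ the degree-zero generators realize to a copy of $\mathbb Q^{N_0}$, a countably infinite set of components each carrying the same model, which I would record as $\bigsqcup_\mathbb N\bigl(\prod_{j\ge1}K(\mathbb Q,j)^{N_j}\bigr)$. The pointed case is identical with $N'_j$ in place of $N_j$. The step I expect to require the most care is this passage to components together with the bookkeeping of the degree-zero and negative-degree generators: in particular, justifying that discarding $W^{<0}$ is legitimate and that the countable disjoint union, rather than a single component, is the correct rational homotopy type exactly when $N_0\ne0$ (respectively $N'_0\ne0$).
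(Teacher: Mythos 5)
Your proposal is correct and follows essentially the same route as the paper's proof: take $B=(H^*(X;\mathbb Q),0)$ by formality and $(\Lambda V,0)$ as the minimal model of the $H$-space $Y$, note that the Haefliger model then has zero differential, count degrees to get $N_j$ and $N'_j$, and distinguish one component when $N_0=0$ (resp.\ $N'_0=0$) from countably many non-homotopic augmentations otherwise, each component realizing $\prod_{j\geq 1}K(\mathbb Q,j)^{N_j}$ (resp.\ with $N'_j$). Your extra care in spelling out why $\widetilde d=0$ and how the ideal $K_u$ acts on $W^{<0}$ and $W^0$ is sound but does not change the argument.
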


\begin{proof}
As $X$ is a formal space, $B=(H^*(X;\Q) ,0)$ is a model of $X$. On the other hand the minimal model of the  H-space $Y$ is of the form
$A=(\Lambda V, 0)$. Then, $(\Lambda( V\otimes B^{\sharp}), 0)$ is  a model of $\map(X,Y)$.

Observe that for any $j$,
$$
\begin{aligned}\dim(V\otimes B^{\sharp})^j  &= \sum_{r+s=j} =\dim V^r \cdot \dim(B^{\sharp})^s\\&= \sum_{r-s=j} \dim\Pi _r(Y)\otimes\mathbb Q \cdot \dim H^s(X; \mathbb Q)=N_j ,\end{aligned}$$
$$
\begin{aligned}\dim(V\otimes B_+^{\sharp})^j  &= \sum_{r+s=j,\, s\neq 0} \dim V^r \cdot \dim(B_+^{\sharp})^s\\&=
\sum_{r-s=j,\, s\neq 0} \dim\Pi _r(Y)\otimes\mathbb Q \cdot \dim  H^s(X; \mathbb Q)=N'_j .\end{aligned}$$
Now, both in the free or pointed case,  there is only one component as long as $(V\otimes B^{\sharp})^0=0$ or $(V\otimes B_+^{\sharp})^0=0$, that is, whenever $N_0=0$ or $N'_0=0$. Otherwise, as the differential is trivial, there are   a countable number of components,  as non homotopic  augmentations in $(V\otimes B^{\sharp})^0$ or $(V\otimes B_+^{\sharp})^0$.  On the other hand, again by the triviality of the differential, it is clear that each component  is of the homotopy type of  $\prod_{j\geq 1}K(\mathbb Q , j)^{N_j}$ in the free case and  $\prod_{j\geq 1}K(\mathbb Q , j)^{N'_j}$ in the pointed case.

\end{proof}

 \section{The proofs}

We first prove Theorem \ref{theo} by applying  the procedure in Section 2 to obtain a model of    $\map(F(\mathbb R^m,3), \S^{n})$. Then, we identify from this model the rational homotopy type of its components.

A CDGA model of the configuration space $F(\mathbb R^m, k)$ is given by its rational cohomology algebra as these spaces are formal  \cite{Kon99}.
It is well known \cite{CLM76} that  $H^*(F(\mathbb R^m, k);\Q)$, is given by
\begin{equation}\label{formula}
H^*(F(\mathbb R^m, k);\Q)=\Lambda (a_{ij})/I,\qquad i\not=j,\quad i,j=1,\dots,k.
\end{equation}
where $\mid a_{ij}\mid=m-1$,  and $I$ is the ideal generated as follows:
$$I=\langle a_{ij}-(-1)^m a_{ji},\quad a_{ij}^2,\quad a_{ij}a_{jr}+a_{jr}a_{ri}+a_{ri}a_{ij}\rangle .$$
For $k=3$, we have
$$H^*(F(\mathbb R^m, 3))=\frac{\Lambda (a_{12}, a_{13}, a_{21}, a_{23},a_{31}, a_{32}) }{I},$$
with  $\mid \overline a_{ij}\mid=m-1$, $\overline{a_{ji}}=(-1)^m\overline{a_{ij}}$, $\overline{a_{ij}}^2=0$  and
$$\overline{ a_{12}}\, \overline{a_{23}}+\overline{a_{23}}\, \overline{a_{31}}+\overline{a_{31}}\, \overline{a_{12}}=0.$$

Then, as a  graded vector space $B=H^*(F(\mathbb R^m, 3))$ is concentrated in degrees $0$, $m-1$ and $2(m-1)$,
$$B=\mathbb Q \, \oplus
\langle \overline{a_{12}} ,  \overline{a_{13}},  \overline{a_{23}}\rangle \, \oplus
 \langle \overline{a_{12}}\, \overline{a_{23}},  \overline{a_{13}}\,  \overline{a_{23}}\rangle  .$$
 Hence, its dual vector space is
 $$B^{\sharp}=H_*(F(\mathbb R^m,3);\mathbb Q)=H_0\oplus H_{m-1}\oplus H_{2(m-1)}$$
where $H_0=\mathbb Q=\langle1\rangle$, $H_{m-1}=\langle \alpha_{12}  ,\alpha_{13}  ,\alpha_{23} \rangle$ and $ H_{2(m-1)}=\langle \alpha_{12, 23} ,\alpha_{13, 23} \rangle$. Here $1, \alpha_{12}  ,\alpha_{13}  ,\alpha_{23}, \alpha_{12, 23} ,\alpha_{13, 23}$ simply denotes the dual basis of
 $1 ,  \overline{a_{12}} ,  \overline{a_{13}}$,  $\overline{a_{23}}, \overline{a_{12}}\, \overline{a_{23}},  \overline{a_{13}}\, \overline{a_{23}} $.

Now, if $n$ is odd, we may apply Proposition \ref{propo} and a straightforward computation proves the assertion  (i) of Theorem \ref{theo}.

From now on we assume $n$ is an even integer and fix the minimal model of $\S^n$ given by
$(\Lambda (x, y), d)$ with $\mid x\mid =n$, $\mid y\mid =2n-1$, $dx=0$ and $dy=x^2$.

We will also need the ring structure of
 $B=H^*(F(\mathbb R^m, 3))$ which is given by the following table:

\medskip
\noindent
\scalebox{0.92}[1]{\begin{tabular}{|c|c|c|c|c|c|c|}
\hline
   & 1& $\overline{a_{12}}$ & $\overline{a_{13}}$ &$\overline{a_{23}}$&$\overline{a_{12}}\,  \overline{a_{23}}$&$\overline{a_{13}}\, \overline{a_{23}}$\\
   \hline
1&  1& $\overline{a_{12}}$ & $\overline{a_{13}}$ &$\overline{a_{23}}$&$\overline{a_{12}}\,  \overline{a_{23}}$&$\overline{a_{13}}\, \overline{a_{23}}$ \\
\hline
$\overline{a_{12}}$ &$\overline{a_{12}}$& 0 &$\overline{a_{12}}\, \overline{a_{23}}-\overline{a_{13}} \, \overline{a_{23}} $  &$\overline{a_{12}} \, \overline{a_{23}} $ &0  &0  \\
\hline
$\overline{a_{13}}$  &$\overline{a_{13}}$& $(-1)^{m-1}\overline{a_{12}}\, \overline{a_{23}}+(-1)^m\overline{a_{13}} \, \overline{a_{23}} $   & 0&  $\overline{a_{13}} \, \overline{a_{23}} $&  0  &0  \\
\hline
$\overline{a_{23}}$  &$\overline{a_{23}}$& $(-1)^{m-1}\overline{a_{12}} \,\overline{a_{23}}$     & $(-1)^{m-1}\overline{a_{13}}\, \overline{a_{23}}$    & 0 &  0  &0\\
\hline
$\overline{a_{12}}\, \overline{a_{23}}$ &$\overline{a_{12}} \,\overline{a_{23}}$ & 0& 0 &  0  &0&0 \\
\hline
$\overline{a_{13}}\, \overline{a_{23}}$  &$\overline{a_{13}}\, \overline{a_{23}}$ & 0& 0 &  0  &0&0\\
\hline
\end{tabular}}
\medskip

From it  one explicitly determines the coproduct $\Delta$ on $B^{\sharp}$:

$$\Delta (1)=1\otimes 1,$$
$$\Delta (\alpha_{12})=1\otimes \alpha_{12}+\alpha_{12}\otimes 1,$$
$$\Delta (\alpha_{13})=1\otimes \alpha_{13}+\alpha_{13}\otimes 1,$$
$$\Delta (\alpha_{23})=1\otimes \alpha_{23}+\alpha_{23}\otimes 1,$$
$$\Delta (\alpha_{12,23})=
1\otimes \alpha_{12,23}+\alpha_{12,23}\otimes 1
+ (-1)^{m+1}\alpha_{12}\otimes \alpha_{23} +\alpha_{23}\otimes \alpha_{12}
+ (-1)^{m+1}\alpha_{12}\otimes \alpha_{13}+\alpha_{13}\otimes \alpha_{12},$$
$$\Delta (\alpha_{13,23})=
1\otimes \alpha_{13,23}+\alpha_{13,23}\otimes 1
+ (-1)^{m+1}\alpha_{13}\otimes \alpha_{23}+\alpha_{23}\otimes \alpha_{13}
+ (-1)^{m}\alpha_{12}\otimes \alpha_{13}-\alpha_{13}\otimes \alpha_{12}.$$

Hence, following the procedure in Section 1, one obtain a model of
 $\map(F(\mathbb R^m, 3) ,\S^n)$ of the form
 $$(\Lambda (V\otimes {B^\sharp}), \widetilde{d} )$$
where
$$\begin{aligned} V\otimes B^{\sharp  }=  &\langle x\otimes 1, x\otimes\alpha_{12}  , x\otimes\alpha_{13}  , x\otimes\alpha_{23},  x\otimes\alpha_{12, 23} , x\otimes\alpha_{13, 23} , \\
  & y\otimes 1, y\otimes\alpha_{12}  , y\otimes\alpha_{13}  , y\otimes\alpha_{23},  y\otimes\alpha_{12, 23} , y\otimes\alpha_{13, 23}\rangle ,  \end{aligned}$$
 in which
 $
 x\otimes 1, x\otimes\alpha_{12}  , x\otimes\alpha_{13}  , x\otimes\alpha_{23},  x\otimes\alpha_{12, 23} , x\otimes\alpha_{13, 23}
 $
 are cycles and
$$ \begin{aligned}
\tilde d(y\otimes 1)=&(x\otimes 1)^2 ,\\
\tilde d(y\otimes \alpha_{12})=& 2(x\otimes 1)(x\otimes \alpha_{12}) ,\\
\tilde d(y\otimes \alpha_{13})=& 2(x\otimes 1)(x\otimes \alpha_{13}) ,\\
\tilde d(y\otimes \alpha_{23})= &2(x\otimes 1)(x\otimes \alpha_{23}) ,\\
\tilde d(y\otimes \alpha_{12, 23})=&2\bigl(  (x\otimes 1)( x\otimes \alpha_{12, 23}) ,\\
&+(-1)^{m+1}( x\otimes \alpha_{12})(x\otimes \alpha_{23})+(-1)^{m+1}(x\otimes  \alpha_{13})( x \otimes \alpha_{23})\bigr) ,\\
\tilde d(y\otimes \alpha_{13, 23})=&2\bigl( (x\otimes 1)( x\otimes \alpha_{13, 23}) ,\\
&+(-1)^{m+1}( x\otimes \alpha_{13})(x\otimes \alpha_{23})+(-1)^{m}(x\otimes  \alpha_{12})( x \otimes \alpha_{13})\bigr) .
   \end{aligned}$$
   To simplify notation write $V\otimes B^{\sharp  }=W$, $\tilde d=d$,
   $$x=x\otimes 1,\quad y=y\otimes 1,$$
   $$p_{i+j-2}=x\otimes\alpha_{ij},\quad q_{i+j-2}=y\otimes\alpha_{ij},$$
   $$ r_{i+j-2}=x\otimes\alpha_{ij,rs},\quad s_{i+j-2}=y\otimes\alpha_{ij, rs}.$$
  Then,
  $$(\Lambda W, {d} )=(\Lambda (x,y,p_1,p_2,p_3,q_1,q_2,q_3,r_1,r_2,s_1,s_2),  d)$$
  where $x$, $p_i$, $r_j$ are cycles ($i=1,2,3$ and $j=1,2$) and
$$
\begin{array}{l}
 d(y)=x^2 ,\\
 d(q_i)= 2xp_i  ,\quad i=1,2,3 \,  ,\\
 d(s_1)=2 ( x r_1   + (-1)^{m+1}p_1p_3 + (-1)^{m+1}p_2 p_3  ),\\
 d(s_2)=2 (  x r_2 +(-1)^{m+1} p_1p_2 + (-1)^{m} p_2p_3 ) .
\end{array}
$$
The degrees of the generators are:
$$
\begin{array}{l}
 \mid x\mid =n ,\\
 \mid y\mid =2n-1 ,\\
 \mid p_i\mid = n-m+1 , \quad i=1,2,3\, , \\
 \mid q_i\mid = 2n-m , \quad i=1,2,3\, , \\
 \mid r_i\mid = n-2m+2 , \quad i=1,2\, , \\
 \mid s_i\mid = 2n-2m+1 , \quad i=1,2\, . \\
\end{array}
$$For the pointed maps, the procedure given in Section 2 produces the following model of $\map^*(F(\mathbb R^m, 3), \S^n)$. Writing $W_+=V\otimes B_+^\sharp$ and with the same notation for the generators, this model is
$$(\Lambda W_+,d)=(\Lambda (p_1,p_2,p_3,q_1,q_2,q_3,r_1,r_2,s_1,s_2), d)$$
where
$$
\begin{array}{l}
 d(s_1)=(-1)^{m+1} 2 ( p_1p_3 + p_2 p_3  ) ,\\
 d(s_2)=(-1)^{m+1} 2 (  p_1p_2 - p_2p_3 ) ,
\end{array}
$$
and the rest of generators are cycles.

We first deal with the case $n=2$ and analyze each component in the cases $m>4$, $m=4$, $m=3$ and $m=2$.

 For $m>4$ we have,
\begin{center}
\begin{tabular}{|c|c|}
\hline
degree& $W$ \\
\hline
$3$& $y$ \\
\hline
$2$& $x$ \\
\hline
$1$&    \\
\hline
$0$&  \\
\hline
$\cdots$&  \\
\hline
$4-m$& $q_1, q_2, q_3$ \\
\hline
$3-m$& $p_1,p_2,p_3$ \\
\hline
$\cdots$&  \\
\hline
$5-2m$& $s_1, s_2$ \\
\hline
$4-2m$& $r_1, r_2$ \\
\hline

\end{tabular}
\end{center}

In this case there are no generators in degree $0$ so the only possible augmentation $\Lambda W \to \mathbb Q $ is the trivial one, that is, there is only one component. Also, there
are no generators in degree $1$, so  projecting over the
the generators of negative degree we obtain the Sullivan model of the $\map(F(\mathbb R ^m , 3), \S^2)$ which turns out to be the minimal model of $\S^2$. For the pointed mapping space observe that $W_+$ is concentrated in negative degrees and therefore $\map^*(F(\mathbb R ^m , 3), \S^2)\simeq _{\mathbb Q} *$.

For  $m=4$ we have,

\begin{center}
\begin{tabular}{|c|c|}
\hline
degree& $W$ \\
\hline
$3$& $y$ \\
\hline
$2$& $x$ \\
\hline
$1$&                        \\
\hline
$0$& $q_1, q_2, q_3$ \\
\hline
$-1$& $p_1,p_2,p_3$ \\
\hline
$-2$&                         \\
\hline
$-3$& $s_1, s_2$ \\
\hline
$-4$& $r_1, r_2$ \\
\hline
\end{tabular}
\end{center}
The existence of generators $q_1,q_2,q_3$ in degree zero provides an augmentation $\varphi \colon (\Lambda W,d)\to\Q$ for each triad $\lambda_1,\lambda_2,\lambda_3$ of rational numbers given by $$\varphi(q_1)=\lambda _1,\quad \varphi(q_2)=\lambda _2,\quad \varphi(q_3)=\lambda _3.$$
Note that different triads produces non homotopic augmentations as they induce different cohomology morphisms.
Therefore, there are a countable number of components in the rationalization of  $\map(F(\mathbb R ^4 , 3), \S^2)$. It is straightforward to check that any of them has  the same  model as $\S^2$, i.e., $\map(F(\mathbb R ^4 , 3), \S^2)\simeq _{\mathbb Q}\bigsqcup _{\mathbb N} \S^2$. As before, $W_+$ is concentrated in non positive degree so that each component of the pointed mapping space is rationally contractible:
$\map^*(F(\mathbb R ^4 , 3), \S^2)\simeq _{\mathbb Q}\bigsqcup _{\mathbb N} *$.

For $m=3$ we have,
\begin{center}
\begin{tabular}{|c|c|}
\hline
degree& $W$ \\
\hline
$3$& $y$ \\
\hline
$2$& $x$ \\
\hline
$1$&    $q_1, q_2, q_3$                   \\
\hline
$0$& $p_1, p_2, p_3$ \\
\hline
$-1$& $s_1, s_2$ \\
\hline
$-2$& $r_1, r_2$ \\
\hline
\end{tabular}
\end{center}
Observe that in this case, an augmentation $\varphi\colon (\Lambda W,d)\to \Q$ is determined also by a triad of rational numbers $\lambda _i=\varphi(p_i$), $i=1,2,3$, which satisfy the equations $\varphi(d s_j)=0$, $j=1,2$. In other words, each augmentation corresponds to a solution of the system,
      $$\left\lbrace
\begin{tabular}{l}
$\lambda _2(\lambda _1-\lambda _3)=0$, \\
  $\lambda _3(\lambda _1+\lambda _2)=0$.
\end{tabular}\right .$$
These are $\{(\lambda,0,0),(0,\lambda,0),(0,0,\lambda),(\lambda,-\lambda,\lambda)\}_{\lambda\in\Q}$
Note also that different solutions correspond to non homotopic augmentations and hence, the mapping space has a countable number of components.

The model of the component corresponding to $\lambda=0$ is
$(\Lambda (q_1, q_2, q_3),0)\otimes (\lambda (x, y), d )$, that is, the model of
 $\S^1\times \S^1\times \S^1\times \S^2 $.
For the rest of the cases straightforward computations provide models of  $ \S^1\times \S^1\times \S^3 $.

Thus,
$\map(F(\mathbb R ^3 , 3), \S^2)\simeq_\mathbb Q (\S^1)^3\times \S^2 \sqcup \bigsqcup _{\mathbb N} (\S^1)^2\times \S^3$.

In the  pointed case we obtain that the model of each component is
$(\Lambda (q_1, q_2, q_3),  0 )$ and therefore
$\map^*(F(\mathbb R ^3 , 3), \S^2)\simeq_\mathbb Q  \bigsqcup _{\mathbb N} (\S^1)^3$.

For $m=2$ we have,
\begin{center}
\begin{tabular}{|c|c|}
\hline
degree& $W$ \\
\hline
$3$& $y$ \\
\hline
$2$& $x,q_1, q_2, q_3$ \\
\hline
$1$&    $p_1, p_2, p_3, s_1, s_2$                   \\
\hline
$0$& $r_1,r_2$ \\
\hline
\end{tabular}
\end{center}
and each augmentation $\varphi\colon(\Lambda W,d)\to\Q$ is determined by a pair of rational numbers $\varphi(r_1)=\lambda_1$ and $\varphi(r_2)=\lambda_2$. According to the procedure in Section 1, the model of the corresponding component is,
$$
(\Lambda W^{\ge 1} ,d)=(\Lambda (x,y,p_1,p_2,p_3,q_1,q_2,q_3,s_1,s_2),d) ,
$$
in which $x,p_1,p_2,p_3$ are cycles and
$$\begin{aligned}
    dy&=x^2 ,   \\
   dq_i&=2xp_i,\,\,i=1,2,3,\\
         ds_1&=2(\lambda _1x-p_1p_3-p_2p_3),\\
 ds_2&=2(\lambda _2x-p_1p_2+p_2p_3).\\
\end{aligned}$$
 For $\lambda_1=\lambda_2=0$ this is the model of
 a rational space $X$
that is the total space of a rational fibration of the form
$$ (\S^1)^2\times K(\mathbb Q, 2)^3\rightarrow X\rightarrow (\S^1)^3\times \S^2 .$$

In the rest of the cases, that is $\lambda_i\not=0$ for some $i=1,2$,  changing basis and discarding the contractible part,
we obtain  the model
$$ (\Lambda  (x_1,  y_1,  z_1,  t_1,   u_2,  v_2,  w_2 ,   u_3 ),d),$$
where subscripts indicates degree,  and all generators are cycles except \break $dt_1=x_1y_1$. The realization is
$$\S^1\times H_e\times K(\mathbb Q, 2)^3\times \S^3,$$
where $H_e$
 is the Heisenberg manifold whose rational model is precisely
 $$(\Lambda  (x_1,  y_1,  z_1,  t_1),d),\quad   dt_1=x_1y_1.$$

Adding up,
$$\map(F(\mathbb R ^2 , 3), \S^2)\simeq_\mathbb Q
X\sqcup \bigsqcup _{\mathbb N} \S^1\times H\times K(\mathbb Q, 2)^3\times \S^3  .$$

 In the based case all the components have the same model,
 $$(\Lambda ( p_1,p_2 ,p_3,s_1,  s_2,q_1, q_2, q_3), d) ,$$
in which the $p_i$'s are cycles and
$$ds_1=\pm 2(p_1p_3+p_2p_3)\, ,\quad ds_2=\pm 2(p_1p_2-p_2p_3) .$$
 It is not difficult to modify this model to get
 $$(\Lambda  (a_1,  b_1,  c_1,   x_1,   y_1),d) \otimes (\Lambda (u_2, v_2, w_2), 0)$$
 where $dx_1=a_1b_1$ and $ dy_1=b_1c_1$,
  here subscripts indicates degree.
  Let $Y$ be the realization of the first factor which has the homotopy type of a nilmanifold \cite{ha}. It is clear that the second one realizes as $K(\mathbb Q, 2)^3$ and thus,
  $$\map^*(F(\mathbb R ^2 , 3), \S^2)\simeq_\mathbb Q
\bigsqcup _{\mathbb N} Y\times K(\mathbb Q, 2)^3  .$$

We now attack the case $n$ even greater or equal than 4. For it, as before, we fix $(\Lambda (x,y),d)$ the minimal model of $\S^n$. We also distinguish different cases:

For $m>2n$,    and arguing as before, we obtain only one component with the same model as $\S^n$ in the free case and $\Q$ in the pointed case. Hence,
$$\map(F(\mathbb R ^m , 3), \S^n)\simeq _{\mathbb Q} \S^n \,\text{and}\,\map^*(F(\mathbb R ^m , 3), \S^n)\simeq _{\mathbb Q} * .$$

For  $m=2n$ the model is of the form
$(\Lambda (x,y),d) \otimes (\Lambda z , 0)$ in the free case 
and $(\Lambda z, 0)$ in the based case, with $z$ of degree $0$. There is trivially a countable number of augmentations and the model of the corresponding component is the model of the $\S^n$ in the free case and contractible in the pointed case. Thus,
$$\map(F(\mathbb R ^m , 3), \S^n)\simeq _{\mathbb Q} \bigsqcup _{\mathbb N} \S^n \,\text{and}\,\map^*(F(\mathbb R ^m , 3), \S^n)\simeq _{\mathbb Q} \bigsqcup _{\mathbb N} * .$$

For $n+2\leq m\leq 2n-1$ the model is of the form
$(\Lambda (x,y),d)\otimes (\Lambda z_{2n-m},0)$
in the free case and $(\Lambda z_{2n-m},0)$ in the pointed one. From here we deduce that
$$
\begin{aligned}\map(F(\mathbb R ^m , 3), \S^n)&\simeq _{\mathbb Q} K(\mathbb Q, 2n-m)\times \S^n,\\\map^*(F(\mathbb R ^m , 3), \S^n)&\simeq _{\mathbb Q}  K(\mathbb Q, 2n-m) .\end{aligned}$$
This completes the proof of Theorem \ref{theo}.

We finish with the proof of  Theorem \ref{elprime}. Recall the explicit description of
$
H^*(F(\mathbb R^m, k);\Q)$ given in (\ref{formula}).
Hence, this cohomology is concentrated in degrees $0$, $m-1$, $2(m-1)$,  \dots , $(k-1)(m-1)$. Now, from the work of Fadell and Neurwith \cite{FN62},  the  Poincar\' e series of
 $F(\mathbb R^m, k)$ is
 $$(1+t^{m-1})(1+2t^{m-1})\cdots (1+(k-1)t^{m-1}) .$$
 From this and the fact \cite{Knuth} that
 $$(1+t)(1+2t)\cdots (1+(k-1)t)=\sum_{j=1}^{k}  {k\brack j } t^j $$
 we obtain the dimensions of the non trivial cohomology of $F(\mathbb R^m , k)$:
 $$\text{dim}\, H^{j(m-1)}(F(\mathbb R^m , k); \Q)={k\brack k-j }\quad \text{for $j=0,\, 1,\, 2,\,\cdots ,\, k-1$}.$$

The proof finishes with a direct computation using Proposition \ref{propo}.

\end{document}